\providecommand{\U}[1]{\protect\rule{.1in}{.1in}}
\newtheorem{theorem}{Theorem}
\theoremstyle{plain}
\newtheorem{corollary}{Corollary}
\newtheorem{definition}{Definition}
\newtheorem{remark}{Remark}
\numberwithin{equation}{section}
\begin{document}
\title[ ]{Kantorovich's Mass Transport Problem for Capacities}
\author{Sorin G. Gal}
\address{Department of Mathematics and Computer Science\\
University of Oradea\\
University\ Street No. 1, Oradea, 410087, Romania}
\email{galso@uoradea.ro}
\author{Constantin P. Niculescu}
\address{Department of Mathematics, University of Craiova, Craiova 200585, Romania\\}
\email{cpniculescu@gmail.com}
\thanks{Corresponding author: Constantin P. Niculescu. E-mail: cpniculescu@gmail.com}
\thanks{Published in Proceedings of the Romanian Academy Series A, \textbf{20} (2019),
No. 4. 337-345. }
\date{December 14, 2019}
\subjclass[2000]{28A12, 90C25, 28A25, 28C15}
\keywords{Choquet integral, comonotonic additivity, upper continuous capacity,
Monge-Kantorovich transportation problem, random variable}

\begin{abstract}
The aim of the present paper is to extend Kantorovich's mass transport problem
to the framework of upper continuous capacities and to prove the cyclic
monotonicity of the supports of optimal solutions. As in the probabilistic
case, this easily yields the corresponding extension of the Kantorovich duality.

\end{abstract}
\maketitle

\section{Introduction}

Kantorovich's mass transport problem was formulated in 1942, when Kantorovich
\cite{K1942} published a note containing the following explicit description of
it in probabilistic terms. Suppose that $X$ and $Y$ are two compact metric
spaces and $c:X\times Y\rightarrow\lbrack0,\infty)$ is a Borel-measurable map
referred to as a \emph{cost} function. Given two Borel probability measures
$\mu$ and $\nu,$ defined respectively on the Borel $\sigma$-algebras
$\mathcal{B}(X)$ and $\mathcal{B}(Y),$ a Borel probability measure $\pi$ on
$\mathcal{B}(X\times Y)=\mathcal{B}(X)\otimes\mathcal{B}(Y)$ is called a
\emph{transport plan} for $\mu$ and $\nu$ if $\mu$ is the projection of $\pi$
on $X$ and $\nu$ is the projection of $\pi$ on $Y,$ that is,
\[
\pi(A\times Y)=\mu(A)\text{ for all }A\in\mathcal{B}(X)\text{ }%
\]
and%
\[
\pi(X\times B)=\nu(B)\text{ for all }B\in\mathcal{B}(Y).
\]
Under the above conditions one said that $\mu$ and $\nu$ are the
\emph{marginals} of $\pi.$ The set $\Pi(\mu,\nu)$, of all transport plans for
$\mu$ and $\nu,$ is always nonempty (it contains at least the product measure
$\mu\otimes\nu)$ and also convex and weak star compact as a subset of
$C(X\times Y)^{\ast}$ $($the dual of the Banach space $C(X\times Y)$ of all
continuous functions $f:X\times Y\rightarrow\mathbb{R})$. This is a
combination of the Banach-Alaoglu theorem and the Riesz representation
theorem. Based on this fact, Kantorovich has noticed that the functional
\begin{equation}
\operatorname*{Cost}(\pi)=\int_{X\times Y}c(x,y)d\pi(x,y),\text{\quad}\pi
\in\Pi(\mu,\nu), \label{f1}%
\end{equation}
attains its infimum in the case of continuous cost functions. The plans $\pi$
at which the infimum is attained are called \emph{optimal transport plans}.

Kantorovich's problem is a relaxation of the Monge problem on "excavation and
embankments" (see \cite{M}), which refers to the minimization of the
\emph{transference cost},%
\[
\int_{X}c(x,T(x))d\mu,
\]
over all Borel measurable mappings $T:X\rightarrow Y$ that \emph{push forward}
$\mu$ to $\nu$ (that is, $\nu(A)=\mu(T^{-1}(A))$ for every $A\in
\mathcal{B}(Y)).$ The transport plan associated to such a mapping $T$ is%
\[
\pi_{T}(B)=\mu\left(  \left\{  x:(x,T(x))\in B\right\}  \right)  \text{\quad
for every }B\in\mathcal{B}(X\times Y).
\]
As a consequence, any optimal transport plan can be viewed as a generalized
solution for Monge's problem.

Kantorovich \cite{K1948} realized this connection in 1948 and since then one
speaks on the Monge--Kantorovich problem, a fusion of the two problems into a
vast subject with deep applications in economics, dynamical systems,
probability and statistics, information theory etc. Details are covered by a
number of excellent surveys and fine books published by Ambrosio \cite{Amb},
Evans \cite{Evans}, Galichon \cite{Galichon}, Gangbo and McCann \cite{GM},
Rachev \cite{Ra}, Rachev and R\"{u}schendorf \cite{RR}, Santambrogio
\cite{San} and Villani \cite{Vil2003}, \cite{Vil2009}, just to cite a few.

The aim of the present paper is to extend Kantorovich's mass transport problem
to the framework of upper/lower continuous capacities and to prove the
cyclical monotonicity of the supports of optimal supermodular plans. As in the
probabilistic case, this easily yields the corresponding extension of the
Kantorovich duality.

The concept of capacity (a kind of monotone set function not necessarily
additive) and the integral associated to it were introduced by Choquet
\cite{Ch1954} \cite{Ch1986} in the early 1950s, motivated by some problems in
potential theory. Nowadays they also become powerful tools in decision making
under risk and uncertainty, game theory, ergodic theory, pattern recognition,
interpolation theory etc. See Adams \cite{Adams},\ Denneberg \cite{Denn},
F\"{o}llmer and Schied \cite{FS}, Wang and Klir \cite{WK} and Wang and Yan
\cite{WY}, as well as the references therein.

Our paper is, to the best of our knowledge, the first to investigate up to
what extent the mass transport theory extends to a context marked by
uncertainty and incomplete knowledge. The necessary background on capacities
and Choquet integral makes the subject of Section 2. The main result of
Section 3 is Theorem 2, which asserts the existence of optimal transport
plans. The critical ingredient in the proof is a nonlinear version of the
Riesz representation theory, due to Epstein and Wang (see Theorem 1). The fact
that the support of any supermodular optimal transport plan is necessarily a
$c$-cyclically monotone set is proved in Theorem 4, Section 4. Actually, for a
given pair of marginals, there is a $c$-cyclically monotone set including the
supports of all supermodular optimal transport plans. See Corollary 2. The
paper ends by noticing that this fact together with a previous result due to
R\"{u}schendorf \cite{Rusch1991c} and Smith and Knott \cite{SK}, easily yield
the extension of the Kantorovich duality in the framework of capacities.

\section{Preliminaries on capacities and Choquet integral}

For the convenience of the reader we will briefly recall some basic facts
about capacities and Choquet integral.

Let $(X,\mathcal{A})$\ be an arbitrarily fixed measurable space, consisting of
a nonempty abstract set $X$ and a $\sigma$-algebra ${\mathcal{A}}$ of subsets
of $X.$

\begin{definition}
\label{def1} A set function $\mu:{\mathcal{A}}\rightarrow\lbrack0,1]$ is
called a capacity if it verifies the following two conditions:

$(a)$ $\mu(\emptyset)=0$ and $\mu(X)=1;$

$(b)~\mu(A)\leq\mu(B)$ for all $A,B\in{\mathcal{A}}$, with $A\subset B$.
\end{definition}

In applications some additional properties are useful.

A capacity $\mu$ is called upper continuous\emph{\ (}or continuous by
descending sequences\emph{)} if
\[
\lim_{n\rightarrow\infty}\mu(A_{n})=\mu\left(
{\displaystyle\bigcap_{n=1}^{\infty}}
A_{n}\right)
\]
for every nonincreasing sequence $(A_{n})_{n}$ of sets in $\mathcal{A};$ $\mu$
is called lower continuous (or continuous by ascending sequences), if
$\lim_{n\rightarrow\infty}\mu(A_{n})=\mu(\cup_{n=1}^{\infty}A_{n})$ for every
nondecreasing sequence $(A_{n})_{n}$ of sets in $\mathcal{A}$.

The upper/lower continuity of a capacity is a generalization of countable
additivity of an additive measure. Indeed, if $\mu$ is an additive capacity,
then upper/lower continuity is the same with countable additivity.

A capacity $\mu$ is called \emph{supermodular} if%
\[
\mu(A)+\mu(B)\leq\mu(A\cup B)+\mu(A\cap B)\text{ for all }A,B\in\mathcal{A};
\]
$\mu$ is called \emph{submodular} if the last inequality works in the opposite direction.

A simple way to construct nontrivial examples of upper continuous supermodular
capacities is to start with a probability measure $P:\mathcal{A\rightarrow
}[0,1]$ and to consider any nondecreasing, convex and continuous function
$u:[0,1]\rightarrow\lbrack0,1]$ such that $u(0)=0$ and $u(1)=1;$ for example,
one may chose $u(t)=t^{a}$ with $\alpha>1.$ Then the \emph{distorted
probability} $\mu=u(P)$ is an upper continuous supermodular capacity on the
$\sigma$-algebra $\mathcal{A}$.

Suppose that $\left(  X,\mathcal{A}\right)  $ and $\left(  Y,\mathcal{B}%
\right)  $ are two measurable spaces. Any (upper continuous) capacity
$\mu:\mathcal{A}\rightarrow\lbrack0,1]$ and any measurable mapping
$T:X\rightarrow Y$ induce a (upper continuous) capacity $T\#\mu$ called the
\emph{push-forward} of $\mu$ through $T$ and defined by the formula%
\[
\left(  T\#\mu\right)  \left(  B\right)  =\mu\left(  T^{-1}\left(  B\right)
\right)  \text{ for all }B\in{\mathcal{B}}.
\]
The main feature of this kind of capacities is the following change of
variables formula%
\begin{equation}
(C)\int_{Y}g(y)d\left(  T\#\mu\right)  =(C)\int_{X}g(T(x))d\mu\label{cvar}%
\end{equation}
which works for all nonnegative bounded random variables $g:Y\rightarrow
\mathbb{R}.$

The concept of integrability with respect to a capacity refers to the whole
class of random variables, that is, to all functions $f:X\rightarrow
\mathbb{R}$ verifying the condition of ${\mathcal{A}}$-measurability
($f^{-1}(A)\in{\mathcal{A}}$ for every set $A\in\mathcal{B}(\mathbb{R}%
)$).\qquad\qquad\qquad\qquad

\begin{definition}
\label{def2}The Choquet integral of a random variable $f$ with respect to the
capacity $\mu$ is defined as the sum of two Riemann improper integrals,
\[
(C)\int_{X}fd\mu=\int_{0}^{+\infty}\mu\left(  \{x\in X:f(x)\geq t\}\right)
dt+\int_{-\infty}^{0}\left[  \mu\left(  \{x\in X:f(x)\geq t\}\right)
-1\right]  dt,
\]
Accordingly, $f$ is said to be Choquet integrable if both integrals above are finite.
\end{definition}

If $f\geq0$, then the last integral in the formula appearing in Definition
\ref{def2} is 0.

The inequality sign $\geq$ in the above two integrands can be replaced by $>;
$ see \cite{WK}, Theorem 11.1,\emph{\ }p. 226.

Every bounded random variable is Choquet integrable. The Choquet integral
coincides with the Lebesgue integral when the underlying set function $\mu$ is
a $\sigma$-additive measure.

The Choquet integral of a function $f:X\rightarrow\mathbb{R}$ over a set
$A\in\mathcal{A}$ is defined via the formula%
\[
(C)\int_{A}fd\mu=\mu(A)\cdot(C)\int_{X}fd\mu_{A},
\]
where $\mu_{A}$ is the capacity defined by $\mu_{A}(B)=\mu(B\cap A)/\mu(A)$
for all $B\in\mathcal{A}.$

We next summarize some basic properties of the Choquet integral.

\begin{remark}
\label{rem1}$(a)$ If $\mu:{\mathcal{A}}\rightarrow\lbrack0,1]$ is a capacity
and $A\in\mathcal{A}$, then the associated Choquet integral is a functional on
the space of all bounded random variables defined on A such that:
\begin{gather*}
f\geq0\text{ implies }(C)\int_{A}fd\mu\geq0\text{ \quad\emph{(}
positivity\emph{)}}\\
f\leq g\text{ implies }\left(  C\right)  \int_{A}fd\mu\leq\left(  C\right)
\int_{A}gd\mu\text{ \quad\emph{(}monotonicity\emph{)}}\\
\left(  C\right)  \int_{A}afd\mu=a\cdot\left(  \left(  C\right)  \int_{A}%
fd\mu\right)  \text{ for }a\geq0\text{ \quad\emph{(}positive\emph{\ }%
homogeneity\emph{)}}\\
\left(  C\right)  \int_{A}1\cdot d\mu=\mu(A)\text{\quad\emph{(}%
calibration\emph{)}};
\end{gather*}
see \emph{\cite{Denn}}, p. \emph{64}, Proposition \emph{5.1} $(ii),$ for the
proof of positive\emph{\ }homogeneity.

$(b)$ In general, the Choquet integral is not additive but, if the bounded
random variables $f$ and $g$ are comonotonic \emph{(}that is, $(f(\omega
)-f(\omega^{\prime}))\cdot(g(\omega)-g(\omega^{\prime}))\geq0,$ for all
$\omega,\omega^{\prime}\in A$\emph{), }then
\[
\left(  C\right)  \int_{A}(f+g)d\mu=\left(  C\right)  \int_{A}fd\mu+\left(
C\right)  \int_{A}gd\mu.
\]
This is usually referred to as the property of comonotonic additivity. An
immediate consequence is the property of translation invariance,
\[
\left(  C\right)  \int_{A}(f+c)d\mu=\left(  C\right)  \int_{A}fd\mu+c\cdot
\mu(A)
\]
for all $c\in\mathbb{R}$ and all \hspace{0in}bounded random variables $f.$ See
\emph{\cite{Denn}}, Proposition \emph{5.1, }$(vi)$, p. \emph{65.}

$(c)$ If $\mu$ is an upper continuous capacity, then the Choquet integral is
upper continuous in the sense that
\[
\lim_{n\rightarrow\infty}\left(  \left(  C\right)  \int_{A}f_{n}d\mu\right)
=\left(  C\right)  \int_{A}fd\mu,
\]
whenever $(f_{n})_{n}$ is a nonincreasing sequence of bounded random variables
that converges pointwise to the bounded variable $f.$ This is a consequence of
the Bepo Levi monotone convergence theorem from the theory of Lebesgue
integral\emph{\ (}see\emph{\ \cite{Din}, Theorem 2, p. 133).}

$(d)$ If $\mu$ is a supermodular capacity, then\ the associated Choquet
integral is a superadditive functional, that is
\[
\left(  C\right)  \int_{A}fd\mu+\left(  C\right)  \int_{A}gd\mu\leq\left(
C\right)  \int_{A}(f+g)d\mu
\]
for all bounded random variables $f$ and $g.$ It is also a supermodular
functional in the sense that
\[
\left(  C\right)  \int_{A}\sup\left\{  f,g\right\}  d\mu+\left(  C\right)
\int_{A}\inf\{f,g\}d\mu\geq\left(  C\right)  \int_{A}fd\mu+(C)\int_{A}gd\mu
\]
for all bounded random variables $f$ and $g.$
\end{remark}

In this paper we are interested in a special kind of measurable spaces, those
of the form $(X,\mathcal{B}(X)),$ where $X$ is a compact metric space and
$\mathcal{B}(X)$ is the $\sigma$-algebra of all Borel subsets of $X.$ We will
denote by $\operatorname*{Ch}(X)$ the class of all upper continuous capacities
$\mu:\mathcal{B}(X)\rightarrow\lbrack0,1]$.

The following analogue of the Riesz representation theorem is due to L. G.
Epstein and T. Wang. See \cite{EW1996}, Theorem 4.2. See also Zhou
\cite{Zhou}, Theorem 1 and Lemma 3, for a simple (and more general) argument.

\begin{theorem}
\label{thm1}Suppose that $I:C(X)\rightarrow\mathbb{R}$ is a comonotonically
additive and monotone functional and $I(1)=1$. Then it is also upper
continuous and there exists a unique upper continuous capacity $\mu
:\mathcal{B}(X)\rightarrow\lbrack0,1]$ such that $I$ coincides with the
Choquet integral associated to it.

On the other hand, according to Remark \ref{rem1}, the Choquet integral
associated to any upper continuous capacity is a comonotonically additive,
monotone and upper continuous functional.
\end{theorem}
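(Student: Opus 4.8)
The plan is to run the classical Daniell--Riesz construction, adapted to the nonlinear (comonotonic) setting essentially as in Epstein--Wang \cite{EW1996} and Zhou \cite{Zhou}. First I would unpack the analytic content hidden in the hypotheses. Since $f$ and any constant are comonotonic, comonotonic additivity together with $I(1)=1$ gives translation invariance, $I(f+c)=I(f)+c$ for all $c\in\mathbb{R}$; applying comonotonic additivity to $f$ with itself and iterating gives positive homogeneity, $I(\lambda f)=\lambda I(f)$ for $\lambda\ge 0$. Translation invariance combined with monotonicity forces $|I(f)-I(g)|\le\|f-g\|_{\infty}$, so $I$ is $1$-Lipschitz for the sup norm; hence if $f_n\downarrow f$ pointwise in $C(X)$, Dini's theorem (here $X$ is compact) upgrades this to uniform convergence and $I(f_n)\to I(f)$, which is the asserted upper continuity of $I$. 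The converse statement of the theorem is exactly Remark~\ref{rem1}(b),(c).

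Next I would construct the capacity by the usual inner/outer recipe: for closed $F\subseteq X$ put $\mu(F)=\inf\{I(g):g\in C(X),\ \mathbf 1_F\le g\le 1\}$; for open $U\subseteq X$ put $\mu(U)=\sup\{\mu(F):F\subseteq U\ \text{closed}\}$; and for arbitrary $A\in\mathcal{B}(X)$ put $\mu(A)=\inf\{\mu(U):A\subseteq U\ \text{open}\}$. One checks that this is consistent (on a clopen set $F$ the indicator $\mathbf 1_F\in C(X)$ realizes the infimum), monotone, with $\mu(\emptyset)=0$ (take $g\equiv 0$) and $\mu(X)=I(1)=1$, and outer regular by construction. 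The genuinely delicate point is to verify that $\mu$ is also \emph{upper continuous} on all of $\mathcal{B}(X)$: using positive homogeneity of $I$ one first shows that closed sets are approximable from outside by open sets, then a finite-intersection (compactness) argument gives $\mu(F_n)\to\mu(F)$ whenever $F_n\downarrow F$ are closed, and the passage to arbitrary Borel sets is the measure-theoretic bookkeeping carried out in \cite{EW1996},\cite{Zhou}.

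The heart of the proof is the identity $I(f)=(C)\int_X f\,d\mu$. By translation invariance it suffices to treat $0\le f\le M$. Fix a partition $0=t_0<t_1<\dots<t_N=M$ and put $g_k=(f\wedge t_k)-(f\wedge t_{k-1})\in C(X)$. The functions $g_1,\dots,g_N$ are pairwise comonotonic (each is a nondecreasing function of $f$) and $\sum_{k=1}^N g_k=f$; since a sum of functions comonotonic with a given $g$ is again comonotonic with $g$, iterated comonotonic additivity gives $I(f)=\sum_{k=1}^N I(g_k)$. Now $\{g_k>0\}\subseteq\{f\ge t_{k-1}\}$ while $g_k=t_k-t_{k-1}$ on $\{f\ge t_k\}$, so $g_k/(t_k-t_{k-1})$ lies below every $g\in C(X)$ with $\mathbf 1_{\{f\ge t_{k-1}\}}\le g\le 1$ and above $\mathbf 1_{\{f\ge t_k\}}$; by monotonicity and positive homogeneity of $I$ this yields
\[
(t_k-t_{k-1})\,\mu(\{f\ge t_k\})\le I(g_k)\le (t_k-t_{k-1})\,\mu(\{f\ge t_{k-1}\}).
\]
Summing over $k$, the value $I(f)$ is squeezed between the lower and the upper Riemann sums of the nonincreasing function $t\mapsto\mu(\{f\ge t\})$ on $[0,M]$, so letting the mesh tend to $0$ gives $I(f)=\int_0^{\infty}\mu(\{f\ge t\})\,dt=(C)\int_X f\,d\mu$. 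For general $f\in C(X)$ one adds a large constant and invokes translation invariance of both sides.

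For uniqueness, if $\nu\in\operatorname*{Ch}(X)$ also represents $I$, then for any closed $F$ the functions $g_n(x)=(1-n\,d(x,F))^{+}$ decrease pointwise to $\mathbf 1_F$, so upper continuity of the Choquet integral (Remark~\ref{rem1}(c)) and calibration give $\nu(F)=\lim_n(C)\int_X g_n\,d\nu=\lim_n I(g_n)=\mu(F)$; thus $\nu$ and $\mu$ agree on closed sets, and regularity propagates this to all of $\mathcal{B}(X)$. I expect the comonotonic layer-cake step to go through almost mechanically once set up, so the main obstacle is the measure-theoretic part of the argument: proving that the regular set function built from $I$ is a bona fide upper continuous capacity and that it is pinned down by its values on closed sets.
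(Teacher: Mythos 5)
The paper itself offers no proof of Theorem \ref{thm1}: it is imported from Epstein--Wang (\cite{EW1996}, Theorem 4.2) and Zhou (\cite{Zhou}, Theorem 1 and Lemma 3), so your proposal is in effect a reconstruction of the cited result. The analytic core of your sketch is correct and is the standard route: translation invariance and the $1$-Lipschitz bound from comonotonic additivity plus monotonicity, Dini's theorem for the continuity of $I$ along decreasing sequences in $C(X)$, the definition of $\mu$ on closed sets by $\mu(F)=\inf\{I(g):\mathbf{1}_F\le g\le 1\}$, and the squeeze of $I(f)=\sum_k I(g_k)$ between the Riemann sums of $t\mapsto\mu(\{f\ge t\})$ using the comonotone layers $g_k=(f\wedge t_k)-(f\wedge t_{k-1})$. (Minor point: comonotonic additivity alone gives $I(qf)=qI(f)$ only for rational $q\ge 0$; real positive homogeneity needs the Lipschitz estimate, so that should come first.)

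The genuine gap is in your final paragraph, and it is exactly where the cited papers do the real work. Showing that a representing upper continuous capacity $\nu$ agrees with your $\mu$ on closed sets does not yield $\nu=\mu$ on $\mathcal{B}(X)$: there is no $\pi$--$\lambda$ machinery for non-additive set functions, and the competitor $\nu$ is not known to carry any of the regularity your construction builds into $\mu$. Indeed, with the paper's definition of upper continuity the values on closed sets do not determine the capacity: on $X=[0,1]$ let $\lambda$ be Lebesgue measure and put $\nu(A)=1$ if $A\supseteq\mathbb{Q}\cap[0,1]$ and $\nu(A)=\lambda(A)$ otherwise; then $\nu$ is monotone, normalized, and continuous along every decreasing sequence of Borel sets (once some $A_n$ fails to contain the rationals, all later terms and the intersection do too, and $\lambda$ takes over), it coincides with $\lambda$ on all closed sets (the only closed set containing $\mathbb{Q}\cap[0,1]$ is $[0,1]$), hence gives the same Choquet integral on $C(X)$ because level sets of continuous functions are closed --- yet $\nu\neq\lambda$. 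So ``regularity propagates this to all of $\mathcal{B}(X)$'' cannot be made into a proof as stated; uniqueness holds only within a class of capacities carrying a regularity or continuity requirement strong enough to recover the capacity from its values on closed sets, which is how Epstein--Wang and Zhou formulate it. The same precision is needed for the other point you defer to the references, namely that your outer-regular extension of the closed-set $\mu$ is continuous along decreasing sequences of closed sets by compactness, but its continuity along arbitrary decreasing Borel sequences is not automatic. To complete the argument you must import the exact class of capacities and the corresponding regularity lemmas from \cite{EW1996} and \cite{Zhou}, rather than treat that part as bookkeeping.
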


This result allows us to identify $\operatorname*{Ch}(X)$ with the set of all
functionals $I$ on $C(X)$ which are comonotonically additive, monotone and
verify $I(1)=1$. An immediate consequence is as follows:

\begin{corollary}
\label{cor1}\emph{(O'Brien, W. Vervaat \cite{OBrien})} $\operatorname*{Ch}(X)$
is compact and metrizable with respect to the weak topology on
$\operatorname*{Ch}(X) $ induced by the duality
\[
\langle\cdot,\cdot\rangle:C(X)\times\operatorname*{Ch}(X)\rightarrow
\mathbb{R},\quad\langle f,\mu\rangle=(C)\int_{X}fd\mu.
\]

\end{corollary}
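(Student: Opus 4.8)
The plan is to realize $\operatorname*{Ch}(X)$ as a closed subset of a compact product space and then to exhibit an explicit metric inducing the weak topology. The first observation I would make is that every $I\in\operatorname*{Ch}(X)$ is $1$-Lipschitz with respect to the supremum norm on $C(X)$: from $f\le g+\Vert f-g\Vert_{\infty}$ and monotonicity together with the translation invariance recorded in Remark \ref{rem1}$(b)$, one gets $\langle f,I\rangle\le\langle g,I\rangle+\Vert f-g\Vert_{\infty}$, and symmetrically. Taking $g$ to be the constants $\min_{X}f$ and $\max_{X}f$ gives in particular $\min_{X}f\le\langle f,I\rangle\le\max_{X}f$. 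Hence the evaluation map $\mu\mapsto(\langle f,\mu\rangle)_{f\in C(X)}$ embeds $\operatorname*{Ch}(X)$ into the product space $K=\prod_{f\in C(X)}[\min_{X}f,\max_{X}f]$, which is compact by Tychonoff's theorem, and the weak topology of the corollary is precisely the topology $\operatorname*{Ch}(X)$ inherits from $K$ with the product topology.

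Next I would show that $\operatorname*{Ch}(X)$ is closed in $K$. By Theorem \ref{thm1}, a point $I\in K$ belongs to $\operatorname*{Ch}(X)$ if and only if $I(1)=1$, $\langle f,I\rangle\le\langle g,I\rangle$ whenever $f\le g$, and $\langle f+g,I\rangle=\langle f,I\rangle+\langle g,I\rangle$ whenever $f$ and $g$ are comonotonic. For each fixed $f$ the coordinate map $I\mapsto\langle f,I\rangle$ is continuous on $K$, so each of these is a closed condition; their (typically uncountable) intersection is $\operatorname*{Ch}(X)$, which is therefore closed in $K$, and so compact. (Here one argues with nets rather than sequences, since $K$ itself need not be metrizable.)

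For metrizability I would invoke the separability of $C(X)$, which holds because $X$ is compact metric. Fixing a countable dense set $\{f_{n}\}_{n\ge1}\subset C(X)$, set
\[
d(I,J)=\sum_{n\ge1}2^{-n}\,\frac{\left\vert \langle f_{n},I\rangle-\langle f_{n},J\rangle\right\vert }{1+\left\vert \langle f_{n},I\rangle-\langle f_{n},J\rangle\right\vert }.
\]
This is a genuine metric: $d(I,J)=0$ forces $I$ and $J$ to agree on the dense set $\{f_{n}\}$, hence on all of $C(X)$ by the $1$-Lipschitz estimate above. Moreover $I\mapsto d(I,J)$ is weakly continuous, being a uniform limit of continuous functions of the coordinates $I\mapsto\langle f_{n},I\rangle$, so the identity map from $(\operatorname*{Ch}(X),\text{weak})$ to $(\operatorname*{Ch}(X),d)$ is continuous; as its source is compact and its target Hausdorff, it is a homeomorphism, and the weak topology coincides with the metric topology.

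The only genuinely delicate point is the last step, and its heart is exactly the uniform $1$-Lipschitz bound: it is what allows a finite approximation $\Vert f-f_{n}\Vert_{\infty}<\varepsilon$ to control $\langle f,I\rangle$ simultaneously for every $I\in\operatorname*{Ch}(X)$, so that smallness of $d$ really does force convergence of $\langle f,\cdot\rangle$ at each $f\in C(X)$. Everything else reduces to Tychonoff's theorem and the remark that ``comonotonically additive, monotone, and $I(1)=1$'' is a closed list of conditions in the product topology.
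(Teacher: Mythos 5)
Your proof is correct and follows essentially the route the paper has in mind: identify $\operatorname*{Ch}(X)$ with the comonotonically additive, monotone, normalized functionals via Theorem \ref{thm1}, get compactness by an Alaoglu-type argument (Tychonoff plus closedness of the defining conditions in the product topology), and get metrizability from a countable dense subset of $C(X)$, which is the same dense-sequence metric the paper quotes from Zhou's Theorem 2. The paper itself offers no proof beyond citing O'Brien--Vervaat and Zhou, so your write-up simply supplies the standard details (including the useful $1$-Lipschitz bound that makes the dense family suffice).
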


A direct argument for Corollary 1 makes the objective of Theorem 2 in
\cite{Zhou}, were it is noticed that the weak convergence on
$\operatorname*{Ch}(X)$ is equivalent with the convergence with respect to the
metric
\[
d_{\operatorname*{Ch}(X)}(\mu,\nu)=\sum_{j=1}^{\infty}\frac{1}{2^{j}%
}\left\vert (C)\int_{X}f_{j}d\mu-(C)\int_{X}f_{j}\,d\nu\right\vert ,
\]
associated to an arbitrary sequence $(f_{j})_{j}$ dense in the unit sphere of
$C(X)$.

\begin{remark}
\label{rem2}Under the assumptions of Theorem \emph{1}, the functional $I$ is
supermodular if and only if the capacity $\mu$ is supermodular. For details
see \emph{\cite{CMMM}}, Theorem \emph{13~(c)}. Notice also that the subset
$\operatorname*{Ch}^{\ast}(X)$ of all supermodular capacities $\mu
\in\operatorname*{Ch}(X)$ is closed with respect to the weak topology
\emph{(}and thus it is compact, according to Corollary \emph{\ref{cor1})}.
\end{remark}

\section{The existence of optimal transport plan}

The framework used in this section parallels that of Borel probability
measures, but the details are based on the integral representation of
comonotonically additive and monotone functionals provided by Theorem 1.

Let $X$ and $Y$ be two compact metric spaces on which there are given the
upper continuous capacities $\mu\in\operatorname*{Ch}(X)$ and respectively
$\nu\in\operatorname*{Ch}(Y).$ A \emph{transport plan} for $\mu$ and $\nu$ is
any capacity $\pi\in\operatorname*{Ch}(X\times Y)$ with marginals
$\mu=\operatorname*{pr}_{X}\#\pi$ and $\nu=\operatorname*{pr}_{Y}\#\pi,$ that
is, such that%
\[
\pi(A\times Y)=\mu(A)\text{ for all }A\in\mathcal{B}(X)\text{ }%
\]
and%
\[
\pi(X\times B)=\nu(B)\text{ for all }B\in\mathcal{B}(Y).
\]
The set of all such transport plans will be denoted $\Pi_{\operatorname*{Ch}%
}(\mu,\nu).$ $\Pi_{\operatorname*{Ch}}(\mu,\nu)$ is a nonempty set. For
example, if $P$ and $Q$ are probability measures, then $(P\otimes Q)^{\alpha}$
is a transport plan for the distorted probabilities $\mu=P^{\alpha}$ and
$\nu=Q^{\alpha},$ whenever $\alpha\geq1$.

Given a Borel measurable cost function $c:X\times Y\rightarrow\lbrack
0,\infty),$ the cost of a transport plan $\pi\in\Pi_{\operatorname*{Ch}}%
(\mu,\nu)$ is defined by a formula similar to formula (\ref{f1}):%
\[
\operatorname*{Cost}(\pi)=(C)\int_{X\times Y}c(x,y)d\pi(x,y).
\]
The existence of the optimal transport plans is motivated by the following result:

\begin{theorem}
\label{thm2}If the cost function $c:X\times Y\rightarrow\lbrack0,\infty)$ is
continuous, then there exists $\pi_{0}\in\Pi_{\operatorname*{Ch}}(\mu,\nu)$
such that
\[
\operatorname*{Cost}(\pi_{0})=\inf_{\pi\in\Pi_{\operatorname*{Ch}}(\mu,\nu
)}\operatorname*{Cost}(\pi).
\]

\end{theorem}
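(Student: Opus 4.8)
The plan is to follow Kantorovich's classical argument, with the Banach--Alaoglu and Riesz ingredients replaced by the nonlinear Riesz-type representation of Theorem \ref{thm1} together with its consequence, the compactness and metrizability of $\operatorname*{Ch}(X\times Y)$ in the weak topology (Corollary \ref{cor1}). Concretely, I would: (i) exhibit $\Pi_{\operatorname*{Ch}}(\mu,\nu)$ as a nonempty \emph{closed}, hence compact, subset of $\operatorname*{Ch}(X\times Y)$; (ii) show that $\operatorname*{Cost}$ is continuous on $\operatorname*{Ch}(X\times Y)$ for the weak topology; and (iii) apply the fact that a real-valued continuous function on a nonempty compact space attains its infimum.

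For step (i), first rephrase the marginal conditions in terms of push-forwards: $\pi(A\times Y)=\mu(A)$ for all $A\in\mathcal{B}(X)$ is exactly $\operatorname*{pr}_{X}\#\pi=\mu$, and likewise $\operatorname*{pr}_{Y}\#\pi=\nu$; note that $\operatorname*{pr}_{X}\#\pi$ indeed lies in $\operatorname*{Ch}(X)$ since the projection is continuous and the push-forward of an upper continuous capacity is upper continuous. By the uniqueness part of Theorem \ref{thm1}, two capacities in $\operatorname*{Ch}(X)$ coincide if and only if their Choquet integrals agree on all of $C(X)$; hence $\operatorname*{pr}_{X}\#\pi=\mu$ is equivalent to
\[
(C)\int_{X\times Y}(f\circ\operatorname*{pr}\nolimits_{X})\,d\pi=(C)\int_{X}f\,d\mu\quad\text{for every }f\in C(X),
\]
where the change of variables formula (\ref{cvar}) has been used. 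Formula (\ref{cvar}) is stated for nonnegative bounded random variables, but it extends to every $f\in C(X)$ (indeed to every bounded random variable) by writing $f=(f+M)-M$ with a constant $M\geq 0$ large enough that $f+M\geq 0$ and invoking translation invariance (Remark \ref{rem1}$(b)$) on both sides. Since $f\circ\operatorname*{pr}_{X}\in C(X\times Y)$, the map $\pi\mapsto (C)\int_{X\times Y}(f\circ\operatorname*{pr}_{X})\,d\pi$ is weakly continuous by the very definition of the weak topology on $\operatorname*{Ch}(X\times Y)$; therefore $\pi\mapsto\operatorname*{pr}_{X}\#\pi$ is continuous from $\operatorname*{Ch}(X\times Y)$ into $\operatorname*{Ch}(X)$, and similarly for $\operatorname*{pr}_{Y}\#$. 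As $\operatorname*{Ch}(X)$ and $\operatorname*{Ch}(Y)$ are metrizable (Corollary \ref{cor1}), singletons are closed, so
\[
\Pi_{\operatorname*{Ch}}(\mu,\nu)=\left(\operatorname*{pr}\nolimits_{X}\#\right)^{-1}(\{\mu\})\cap\left(\operatorname*{pr}\nolimits_{Y}\#\right)^{-1}(\{\nu\})
\]
is closed in the compact metrizable space $\operatorname*{Ch}(X\times Y)$, hence compact; it is nonempty as already observed in the text.

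Step (ii) is then immediate: since $c$ is continuous and $X\times Y$ is compact, $c\in C(X\times Y)$, so $\pi\mapsto\operatorname*{Cost}(\pi)=(C)\int_{X\times Y}c\,d\pi$ is, by definition, continuous for the weak topology on $\operatorname*{Ch}(X\times Y)$. Restricting this continuous functional to the nonempty compact set $\Pi_{\operatorname*{Ch}}(\mu,\nu)$ and applying the Weierstrass theorem produces a minimizer $\pi_{0}\in\Pi_{\operatorname*{Ch}}(\mu,\nu)$, which is the asserted optimal transport plan. (Equivalently, one may argue sequentially: take a minimizing sequence $\pi_{n}$, extract a weakly convergent subsequence $\pi_{n_k}\to\pi_{0}$ by compactness, check that the marginals pass to the limit, and conclude $\operatorname*{Cost}(\pi_{0})=\lim_{k}\operatorname*{Cost}(\pi_{n_k})=\inf$.)

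The only delicate point—and the main obstacle—is step (i): one must justify the passage ``equality of capacities $\Longleftrightarrow$ equality of the associated functionals on $C(X)$'' (this is exactly where upper continuity and the uniqueness in Theorem \ref{thm1} are used) and must ensure the change of variables formula is available for the test functions $f\circ\operatorname*{pr}_{X}$, which are continuous but not necessarily nonnegative; both are dealt with as indicated above. No convexity of $\Pi_{\operatorname*{Ch}}(\mu,\nu)$ is needed, only its compactness, exactly as in the probabilistic case; and since $c\geq 0$ is continuous, plain continuity (rather than mere lower semicontinuity) of $\operatorname*{Cost}$ is available, so the argument does not require any extra regularization of the cost.
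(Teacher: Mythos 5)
Your proposal is correct and follows essentially the same route as the paper: identify $\operatorname*{Ch}(X\times Y)$ with comonotonically additive monotone functionals via Theorem \ref{thm1}, use Corollary \ref{cor1} for compactness and metrizability, show the marginal constraints cut out a closed (hence compact) subset, and conclude by weak continuity of $\pi\mapsto (C)\int c\,d\pi$. The only cosmetic differences are that you verify closedness via continuity of the push-forward maps and preimages of singletons (using uniqueness in Theorem \ref{thm1} plus the change of variables formula, a point the paper handles instead by comparing level sets), whereas the paper argues with a convergent sequence in $\Phi(\mu,\nu)$ and a minimizing sequence rather than invoking Weierstrass directly.
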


We will denote by $\pi\rightarrow$ $I_{\pi}$ the bijection stated by Theorem
\ref{thm1}, that makes possible to identify the set $\operatorname*{Ch}%
(X\times Y),$ of all upper continuous capacities on $X\times Y,$ with the set
$\Phi,$ of all comonotonically additive and monotone functionals $I$ on
$C(X\times Y)$ that verify $I(1)=1$. By this bijection, the set $\Pi
_{\operatorname*{Ch}}(\mu,\nu)$ corresponds to the subset $\Phi(\mu,\nu)$ of
$\Phi$, consisting of those functionals $I:C(X\times Y)\rightarrow\mathbb{R}$
such that%
\begin{equation}
I(\overline{u})=I_{\mu}(u)\text{ for all }u\in C(X) \label{m1}%
\end{equation}
and
\begin{equation}
I(\overline{w})=I_{\nu}(w)\text{ for all }w\in C(Y), \label{m2}%
\end{equation}
where $\overline{u}$ and respectively $\overline{w}$ represent the extensions
of $u$ and $w$ to $X\times Y$ via the formulas
\begin{equation}
\overline{u}(x,y)=u(x)\text{ and}~\overline{w}(x,y)=w(y)\text{ for all
}\left(  x,y\right)  \in X\times Y, \label{m3}%
\end{equation}
\noindent and $I_{\mu}:C(X)\rightarrow\mathbb{R}$ and $I_{\nu}:C(Y)\rightarrow
\mathbb{R}$ are the unique comonotonically additive and monotone functionals
generated by $\mu$ and $\nu$ via the Choquet integral. Indeed, assuming that
$I=I_{\pi},$ since $\mu$ and $\nu$ are the marginals of $\pi$, we have%
\[
\pi(\{(x,y)\in X\times Y:\overline{u}(x,y)\geq\alpha\})=\mu(\{x\in
X:u(x)\geq\alpha\}),
\]
and%
\[
\pi(\{(x,y)\in X\times Y:\overline{w}(x,y)\geq\alpha\})=\nu(\{y\in
Y:w(y)\geq\alpha\}),
\]
for all $\alpha\in\mathbb{R},$ whence, by the definition of the Choquet
integral, we infer that
\[
I_{\pi}(\overline{u})=I_{\mu}(u)\text{ and }I_{\pi}(\overline{w})=I_{\nu
}(w)\text{.}%
\]
We will use the above remark to prove that $\Phi(\mu,\nu)$ is a closed subset
of $\Phi$ (and thus compact, according to Corollary 1). Since the topology of
$\Phi$ is metrizable, this reduces to the fact that $\Phi(\mu,\nu)$ is closed
under the operation of taking countable limits. For this, let $(\gamma
_{n})_{n}$ be a sequence of elements of $\Phi(\mu,\nu)$ converging to a
capacity $\gamma\in\Phi$ and put $I_{n}=I_{\gamma_{n}}$ and $I=I_{\gamma}$ in
order to simplify the notation. By our assumptions, the functionals $I_{n}$
and $I$ are comonotonically additive and monotone and%
\begin{equation}
I_{n}(f)\rightarrow I(f)\text{\quad for all }f\in C(X\times Y). \label{cv1}%
\end{equation}
The membership of $I_{n}$ to $\Phi(\mu,\nu)$ translates into the formulas%
\begin{equation}
I_{n}(\overline{u})=I_{\mu}(u)\text{\quad for all }u\in C(X)
\end{equation}
and
\begin{equation}
I_{n}(\overline{w})=I_{\nu}(w)\text{\quad for all }w\in C(Y),
\end{equation}
where $\overline{u}$ and respectively $\overline{w}$ represent the extensions
of $u$ and $w$ to $X\times Y$ via the formulas (\ref{m3}). Combining this fact
with (\ref{cv1}), one easily conclude that $I$ verifies similar formulas and
thus the capacity $\gamma$ that generates $I$ belongs to $\Phi(\mu,\nu).$ In
order to end the proof let's choose a sequence $(\pi_{n})_{n}$ of elements of
$\Pi_{\operatorname*{Ch}}(\mu,\nu)$ which minimizes the cost function, that
is, such that
\[
\lim_{n\rightarrow\infty}\,\operatorname*{Cost}(\pi_{n})=m=\inf_{\gamma\in
\Pi_{\operatorname*{Ch}}(\mu,\nu)}\operatorname*{Cost}(\gamma).
\]
Since $\Pi_{\operatorname*{Ch}}(\mu,\nu)$ is a compact set, we may assume (by
passing to a subsequence if necessary) that $(\pi_{n})_{n}$ converges to some
$\pi\in\Pi_{\operatorname*{Ch}}(\mu,\nu).$ Then $\operatorname*{Cost}(\pi)=m,$
which means that $\pi$ is an optimal transport plan.

\begin{remark}
It is a simple exercise to prove that if $X$ is a compact metric space, then
every functional $I$ on $C(X)$ which is comonotonically additive and monotone
is also lower continuous. Combining this fact with Proposition \emph{17
(i)\&(ii)} and Corollary \emph{18} in \emph{\cite{CMMM}}, one can easily
obtain the validity of the "lower" version of Theorem $1$.
\end{remark}

As concerns the case of supermodular capacities, let us notice first that the
existence of a supermodular transport plan $\pi\in\Pi_{\operatorname*{Ch}}%
(\mu,\nu)$ imposes that both $\mu$ and $\nu$ are supermodular. See \cite{Ghi},
Lemma 4, p. 281. An inspection of the argument of Theorem 2 easily yields the
following result:

\begin{theorem}
\label{thm3}If the cost function $c:X\times Y\rightarrow\lbrack0,\infty)$ is
continuous and $\mu$ and $\nu$ are two supermodular upper continuous
capacities such that
\[
\Pi_{\operatorname*{Ch}^{\ast}}(\mu,\nu)=\left\{  \pi\in\Pi
_{\operatorname*{Ch}}(\mu,\nu):\pi\text{ supermodular}\right\}
\]
is nonempty, then there exists $\pi_{0}\in\Pi_{\operatorname*{Ch}^{\ast}}
(\mu,\nu) $ such that
\[
\operatorname*{Cost}(\pi_{0})=\inf_{\pi\in\Pi_{\operatorname*{Ch}^{\ast}}
(\mu,\nu)}\operatorname*{Cost}(\pi).
\]

\end{theorem}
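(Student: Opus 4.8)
The plan is to run the argument of Theorem~\ref{thm2} word for word, the only change being that the ambient compact space $\operatorname*{Ch}(X\times Y)$ is replaced by its subset $\operatorname*{Ch}^{\ast}(X\times Y)$ of supermodular upper continuous capacities. Recall from Remark~\ref{rem2} that $\operatorname*{Ch}^{\ast}(X\times Y)$ is closed with respect to the weak topology, hence compact and metrizable by Corollary~\ref{cor1}. So the whole proof takes place inside a compact metric space, exactly as before.

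First I would verify that $\Pi_{\operatorname*{Ch}^{\ast}}(\mu,\nu)$ is compact. By definition
\[
\Pi_{\operatorname*{Ch}^{\ast}}(\mu,\nu)=\Pi_{\operatorname*{Ch}}(\mu,\nu)\cap\operatorname*{Ch}^{\ast}(X\times Y),
\]
and both factors are weakly closed: the first because the marginal identities \eqref{m1}--\eqref{m2} pass to weak limits (this is precisely the closedness of $\Pi_{\operatorname*{Ch}}(\mu,\nu)$ established in the proof of Theorem~\ref{thm2}), the second by Remark~\ref{rem2}. Thus $\Pi_{\operatorname*{Ch}^{\ast}}(\mu,\nu)$ is a closed subset of a compact metric space, hence compact; by hypothesis it is also nonempty. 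Next, since $X\times Y$ is compact the continuous cost function $c$ is bounded, so $c\in C(X\times Y)$, and by the very definition of the weak topology the functional
\[
\pi\longmapsto\operatorname*{Cost}(\pi)=(C)\int_{X\times Y}c\,d\pi=\langle c,\pi\rangle
\]
is weakly continuous on $\operatorname*{Ch}(X\times Y)$, a fortiori on $\Pi_{\operatorname*{Ch}^{\ast}}(\mu,\nu)$. Finally, a real-valued continuous function on a nonempty compact space attains its infimum; equivalently, pick a minimizing sequence $(\pi_{n})_{n}$ in $\Pi_{\operatorname*{Ch}^{\ast}}(\mu,\nu)$, pass to a weakly convergent subsequence with limit $\pi_{0}\in\Pi_{\operatorname*{Ch}^{\ast}}(\mu,\nu)$, and conclude that $\operatorname*{Cost}(\pi_{0})=\inf_{\pi\in\Pi_{\operatorname*{Ch}^{\ast}}(\mu,\nu)}\operatorname*{Cost}(\pi)$.

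I do not expect a genuine obstacle: the single point at which this statement departs from Theorem~\ref{thm2} is the stability of the supermodularity constraint under weak limits, and that is exactly the content of Remark~\ref{rem2} (which rests on \cite{CMMM}, Theorem~13~(c)). It is worth stressing that the nonemptiness hypothesis on $\Pi_{\operatorname*{Ch}^{\ast}}(\mu,\nu)$ genuinely cannot be dropped: by \cite{Ghi} it already forces $\mu$ and $\nu$ to be supermodular, and even then there need not be a supermodular analogue of a "product'' transport plan, so unlike in Theorem~\ref{thm2} the set of admissible plans is not nonempty for free.
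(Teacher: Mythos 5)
Your proposal is correct and is exactly the argument the paper intends: the paper gives no separate proof of Theorem~\ref{thm3}, saying only that ``an inspection of the argument of Theorem 2 easily yields'' it, and your inspection --- compactness of $\Pi_{\operatorname*{Ch}^{\ast}}(\mu,\nu)$ via the closedness of the marginal constraints from the proof of Theorem~\ref{thm2} together with the weak closedness of the supermodular capacities from Remark~\ref{rem2}, plus weak continuity of $\pi\mapsto(C)\int_{X\times Y}c\,d\pi$ --- is precisely that inspection. Your remark on the role of the nonemptiness hypothesis likewise matches the paper's comment based on \cite{Ghi}.
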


\section{A necessary condition for the optimality of a transport plan}

The aim of this section is to prove that optimal transport plans have
$c$-cyclically monotone supports, a fact that relates them to the theory of
$c$-concave functions. For this, we need some preparation.

As above, $X$ and $Y$ are compact metric spaces and $c:X\times Y\rightarrow
\lbrack0,\infty)$ is a cost function.

\begin{definition}
\label{def3}A subset $S\subset X\times Y$ is called $c$-cyclically monotone if
for every finite number of points $(x_{i},y_{i})\in S,$ $i=1,...,n$, and any
permutation $\sigma$ of $\left\{  1,...,n\right\}  ,$ we have%
\[
\sum\limits_{i=1}^{n}c(x_{i},y_{i})\leq\sum\limits_{i=1}^{n}c(x_{\sigma
(i)},y_{i}).
\]

\end{definition}

By definition, the support of an upper continuous capacity $\gamma
:{\mathcal{B}}(X\times Y)\rightarrow\lbrack0,1]$ is the set
$\operatorname*{supp}(\gamma)$ of all points $(x,y)$ in $X\times Y$ for which
every open neighborhood $\mathcal{W}$ verifies $\gamma\left(  \mathcal{W}%
\right)  >0.$ The support is a closed set since its complement is the union of
the open sets of capacity zero.

We are now in a position to prove the following result.

\begin{theorem}
\label{thm4}If the cost function $c:X\times Y\rightarrow\lbrack0,\infty)$ is
continuous, then every supermodular optimal transport plan $\pi\in
\Pi_{\operatorname*{Ch}}(\mu,\nu)$ has a $c$-cyclically monotone support.
\end{theorem}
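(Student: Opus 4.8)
The plan is to mimic the classical proof that optimal transport plans are concentrated on $c$-cyclically monotone sets, but replacing ``concentrated on'' by the support language and replacing convex combinations of measures by the superadditivity of the Choquet integral attached to a supermodular capacity. Suppose, for contradiction, that $\operatorname{supp}(\pi)$ is not $c$-cyclically monotone. Then there exist an integer $n$, points $(x_{i},y_{i})\in\operatorname{supp}(\pi)$, $i=1,\dots,n$, and a permutation $\sigma$ of $\{1,\dots,n\}$ such that
\[
\sum_{i=1}^{n}c(x_{\sigma(i)},y_{i})<\sum_{i=1}^{n}c(x_{i},y_{i}).
\]
Let $\delta>0$ be such that the deficit above is at least $2n\delta$. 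By continuity of $c$, choose open neighborhoods $U_{i}\ni x_{i}$ in $X$ and $V_{i}\ni y_{i}$ in $Y$ so small that $|c(x,y)-c(x_{i},y_{i})|<\delta/2$ for $(x,y)\in U_{i}\times V_{i}$ and $|c(x,y)-c(x_{\sigma(i)},y_{i})|<\delta/2$ for $(x,y)\in U_{\sigma(i)}\times V_{i}$. Since each $(x_{i},y_{i})$ is in the support of $\pi$, the open ``box'' $W_{i}=U_{i}\times V_{i}$ has $\pi(W_{i})>0$; put $\varepsilon=\min_{i}\pi(W_{i})>0$.

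Next I would build a competitor plan that is strictly cheaper, contradicting optimality of $\pi$. In the probabilistic setting one removes a little mass $\varepsilon_{0}$ from each $W_{i}$ and re-glues the marginals cross-wise using disintegration. Here the analogue is to keep $\pi$ but subtract a small supermodular (indeed modular) perturbation supported on the boxes and add back the cross-matched one. Concretely, using the $X$-marginal of $\pi\!\mid_{W_{i}}$ and the $Y$-marginal of $\pi\!\mid_{W_{i}}$ one forms, for a small parameter $\eta>0$, the signed set function
\[
\tilde\pi \;=\; \pi \;-\;\eta\sum_{i=1}^{n}\lambda_{i}\;+\;\eta\sum_{i=1}^{n}\lambda_{\sigma,i},
\]
where $\lambda_{i}$ is (a product probability measure equivalent to) the restriction of $\pi$ to $W_{i}$ normalized, and $\lambda_{\sigma,i}$ is the product of the $X$-marginal of $\lambda_{\sigma(i)}$ with the $Y$-marginal of $\lambda_{i}$. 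For $\eta$ small enough $\tilde\pi$ is again a capacity in $\Pi_{\operatorname{Ch}}(\mu,\nu)$: the marginal constraints are preserved because on each fiber we have only permuted the $X$-marginals among the boxes, and monotonicity/upper continuity survive because the perturbation is a finite signed combination of $\sigma$-additive measures dominated by $\eta\pi$. Then, using comonotonic additivity and translation invariance of the Choquet integral together with the superadditivity afforded by supermodularity of $\pi$ (Remark~1~(d)), one estimates
\[
\operatorname{Cost}(\tilde\pi)\;\le\;\operatorname{Cost}(\pi)\;-\;\eta\Big(\sum_{i=1}^{n}c(x_{i},y_{i})-\sum_{i=1}^{n}c(x_{\sigma(i)},y_{i})\Big)+\eta\,n\,\delta\;<\;\operatorname{Cost}(\pi),
\]
the middle term coming from the choice of the neighborhoods and the error bound $\delta$, contradicting the optimality of $\pi$.

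The main obstacle is making the perturbation step rigorous in the capacity world, where one has no disintegration theorem and where ``subtracting'' a set function need not stay monotone. The honest route is probably to pass through the functional picture of Theorem~1: identify $\pi$ with $I_{\pi}$, show that $I_{\pi}-\eta\sum_i L_i+\eta\sum_i L_{\sigma,i}$ (with $L_i$ the genuine Lebesgue/Choquet integrals against the auxiliary product measures) is still comonotonically additive, monotone, normalized at $1$, and supermodular for small $\eta$, hence corresponds to an element of $\Pi_{\operatorname{Ch}^{*}}(\mu,\nu)$; the comonotone additivity is the delicate point and is where one must exploit that the perturbing pieces are classical measures so that the sum splits cleanly on the relevant comonotone cones. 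Once that technical lemma is in place, the cost comparison above is just bookkeeping with the properties listed in Remark~1, and the contradiction with Theorem~2 (or Theorem~3) closes the argument. An alternative, lighter approach would be to argue directly on the cost functional, using only that the Choquet integral against a supermodular capacity is superadditive to get $\operatorname{Cost}(\pi)\ge\operatorname{Cost}(\pi')+\operatorname{Cost}(\pi'')$-type splittings for pieces concentrated on the boxes, but I expect this still needs the product-measure auxiliaries to define the cross-matched pieces, so the same technical core reappears.
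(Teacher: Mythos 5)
Your overall strategy is the right one and matches the paper's: assume the support is not $c$-cyclically monotone, localize the violation in small boxes $U_i\times V_i$ around the offending support points, and build a cheaper competitor by subtracting a small multiple of the plan restricted to the boxes and adding back cross-matched products of marginals (your $\lambda_{\sigma,i}=\mu_{\sigma(i)}\otimes\nu_i$ is indeed the correct cross-matching). However, the step you explicitly defer --- showing that the perturbed set function is still a capacity in $\Pi_{\operatorname{Ch}}(\mu,\nu)$ --- is precisely the mathematical content of the theorem, and the justification you sketch for it does not work. Monotonicity of $\pi-\eta\sum_i\lambda_i$ does \emph{not} follow from the perturbation being ``a finite signed combination of $\sigma$-additive measures dominated by $\eta\pi$'': setwise domination $\eta\lambda_i\le\pi$ gives nonnegativity of the difference but says nothing about monotonicity, since for a non-additive $\pi$ one needs the increment inequality $\pi(\mathcal W)-\pi(\mathcal V)\ge\eta\sum_i\bigl(\lambda_i(\mathcal W)-\lambda_i(\mathcal V)\bigr)$ for all $\mathcal V\subset\mathcal W$, which can fail for an arbitrary dominated measure. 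Moreover, replacing the restricted plan by ``a product probability measure equivalent to'' it is not harmless: equivalence does not preserve the marginal identities you need, so the competitor would no longer have marginals $\mu$ and $\nu$. This is also where your hypothesis of supermodularity of $\pi$ must enter, and in your proposal it is never actually used.

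The paper closes exactly this gap by a specific choice of the subtracted pieces and a supermodularity argument. It takes $\pi_i(\mathcal W)=\pi(\mathcal W\cap(U_i\times V_i))/\pi(U_i\times V_i)$ (the conditioned \emph{capacity}, not an auxiliary measure) and the weight $\alpha=\tfrac1n\min_i\pi(U_i\times V_i)$ --- note the factor $1/n$, absent from your $\varepsilon=\min_i\pi(W_i)$, which is needed because $n$ pieces are subtracted simultaneously. Monotonicity of $\gamma=\pi-\alpha\sum_i\pi_i+\alpha\sum_i\mu_{\sigma(i)}\otimes\nu_i$ then reduces to the inequality
\[
\pi(\mathcal W)+\pi(\mathcal V\cap\mathcal K)\ \ge\ \pi(\mathcal V)+\pi(\mathcal W\cap\mathcal K)
\qquad(\mathcal V\subset\mathcal W,\ \mathcal K=U_i\times V_i),
\]
which is proved from the pointwise inequality $\chi_{\mathcal W}+\chi_{\mathcal V\cap\mathcal K}\ge\chi_{\mathcal V}+\chi_{\mathcal W\cap\mathcal K}$ using comonotonic additivity of the Choquet integral (the pair $\chi_{\mathcal W},\chi_{\mathcal V\cap\mathcal K}$ is comonotone because $\mathcal V\cap\mathcal K\subset\mathcal W$) together with the superadditivity of the Choquet integral for the supermodular capacity $\pi$ (Remark 1(d)); equivalently, it is the supermodularity inequality applied to $A=\mathcal V$, $B=\mathcal W\cap\mathcal K$. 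Once this lemma is in hand, the marginal and upper-continuity checks and the cost bookkeeping go through as in your sketch. So your proposal correctly reproduces the Ambrosio-type skeleton, but the ``technical core'' you flag as the obstacle is not optional fine print; without the conditioned-capacity construction, the $1/n$ normalization, and the supermodularity-based monotonicity lemma, the argument is incomplete.
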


\begin{proof}
Our argument is close to that used by Ambrosio \cite{Amb}, Theorem 2.2, in the
probabilistic framework.
If $\operatorname*{supp}(\pi)$ were not $c$-cyclically monotone, then there
would exist points $(\bar{x}_{1},\bar{y}_{1}),...,\allowbreak(\bar{x}_{n}%
,\bar{y}_{n})$ in $\operatorname*{supp}(\pi)$ and a permutation $\sigma$ of
$\left\{  1,...,n\right\}  $ such that%
\[
\sum\limits_{i=1}^{n}c(\bar{x}_{i},\bar{y}_{i})>\sum\limits_{i=1}^{n}c(\bar
{x}_{\sigma(i)},\bar{y}_{i}).
\]
Choose $\varepsilon$ such that
\[
0<\varepsilon<\frac{1}{2n}\left(  \sum\limits_{i=1}^{n}c(\bar{x}_{i},\bar
{y}_{i})-\sum\limits_{i=1}^{n}c(\bar{x}_{\sigma(i)},\bar{y}_{i})\right)  .
\]
Since $c$ is a continuous function, there exists compact neighborhoods $U_{i}$
of $\bar{x}_{i}$ and $V_{i}$ of $\bar{y}_{i}$ such that $c(x_{i},y_{i}%
)>c(\bar{x}_{i},\bar{y}_{i})-\varepsilon$ for all $(x_{i},y_{i})\in
U_{i}\times V_{i}$ and $c(x_{i},y_{i})<c(\bar{x}_{\sigma(i)},\bar{y}%
_{i})+\varepsilon$ for all $(x_{i},y_{i})\in U_{\sigma(i)}\times V_{i}.$
We have $\pi\left(  U_{i}\times V_{i}\right)  >0,$ due to the fact that
$(x_{i},y_{i})\in\operatorname*{supp}(\pi).$ Then $\alpha=(1/n)\min_{i}%
\pi\left(  U_{i}\times V_{i}\right)  >0$ and we can consider the upper
continuous capacities $\pi_{i}$ defined by
\[
\pi_{i}(\mathcal{W})=\frac{\pi(\mathcal{W}\cap\left(  U_{i}\times
V_{i}\right)  )}{\pi\left(  U_{i}\times V_{i}\right)  }\text{\quad for every
}\mathcal{W}\in{\mathcal{B}}(X\times Y).
\]
The marginals of $\pi_{i}$ are $\mu_{i}=\operatorname*{pr}_{X}\#\pi_{i}$ and
$\mu_{i}=\operatorname*{pr}_{Y}\#\pi_{i},$ where $\operatorname*{pr}_{X}$ and
$\operatorname*{pr}_{Y}$ are the canonical projections of $X\times Y$
respectively on $X$ and $Y.$ The set function%
\[
\gamma=\pi-\alpha\sum_{i=1}^{n}\pi_{i}+\alpha\sum_{i=1}^{n}\mu_{i}\otimes
\nu_{i}%
\]
is nonnegative since $\pi-\alpha\sum_{i-1}^{n}\pi_{i}\geq0,$ according to the
choice of $\alpha.$ Moreover, $\gamma(\emptyset)=0$ and $\gamma(X\times Y)=1.$
In order to prove the monotonicity\ of $\gamma$, let's consider two Borel
subsets $\mathcal{V}$ and $\mathcal{W}$ of $X\times Y$ such that
$\mathcal{V}\subset\mathcal{W}.$
The inequality $\gamma(\mathcal{V})\leq\gamma(\mathcal{W})$ is a consequence
of the fact that%
\[
\pi(\mathcal{W})-\pi(\mathcal{V})\geq\alpha\sum_{i=1}^{n}\frac{\pi
(\mathcal{W}\cap\left(  U_{i}\times V_{i}\right)  )-\pi(\mathcal{V}\cap\left(
U_{i}\times V_{i}\right)  )}{\pi\left(  U_{i}\times V_{i}\right)  }.
\]
For this it suffices to show that
\[
\pi(\mathcal{W})-\pi(\mathcal{V})\geq\pi(\mathcal{W}\cap\left(  U_{i}\times
V_{i}\right)  )-\pi(\mathcal{V}\cap\left(  U_{i}\times V_{i}\right)  ),
\]
for all indices $i.$ We claim that actually%
\[
\pi(\mathcal{W})+\pi(\mathcal{V}\cap\mathcal{K})\geq\pi(\mathcal{V}%
)+\pi(\mathcal{W}\cap\mathcal{K})
\]
for every Borel subset $\mathcal{K}\subset X\times Y.$ Indeed,
\[
\chi_{\mathcal{W}}+\chi_{\mathcal{V}\cap\mathcal{K}}\geq\chi_{\mathcal{V}%
}+\chi_{\mathcal{W}\cap\mathcal{K}}%
\]
and because $\chi_{\mathcal{W}}$ and $\chi_{\mathcal{V}\cap\mathcal{K}}$ are
comonotonic and $\pi$ is monotone and supermodular we have%
\begin{align*}
\pi(\mathcal{W})+\pi(\mathcal{V}\cap\mathcal{K})  &  =(C)\int_{X\times Y}%
\chi_{\mathcal{W}}d\pi+(C)\int_{X\times Y}\chi_{\mathcal{V}\cap\mathcal{K}%
}d\pi\\
&  =(C)\int_{X\times Y}\left(  \chi_{\mathcal{W}}+\chi_{\mathcal{V}%
\cap\mathcal{K}}\right)  d\pi\geq(C)\int_{X\times Y}\left(  \chi_{\mathcal{V}%
}+\chi_{\mathcal{W}\cap\mathcal{K}}\right)  d\pi\\
&  =(C)\int_{X\times Y}\chi_{\mathcal{V}}d\pi+(C)\int_{X\times Y}%
\chi_{\mathcal{W}\cap\mathcal{K}}d\pi\\
&  =\pi(\mathcal{V})+\pi(\mathcal{W}\cap\mathcal{K}).
\end{align*}
This ends the proof of the monotonicity of $\gamma.$
Clearly, $\gamma$ is upper continuous and its marginals are $\mu$ and $\nu.$
The cost of the transport plan $\gamma$ is less than the cost of $\pi$
because
\begin{align*}
&  \operatorname*{Cost}(\pi)-\operatorname*{Cost}(\gamma)\\
&  =\alpha\sum_{i=1}^{n}\left(  (C)\int_{X\times Y}c(x,y)d\pi_{i}\right)
-\alpha\sum_{i=1}^{n}\left(  (C)\int_{X\times Y}c(x,y)d\mu_{i}\otimes\nu
_{i}\right) \\
&  \geq\alpha\sum_{i=1}^{n}\left(  c(\bar{x}_{i},\bar{y}_{i})-\varepsilon
\right)  -\alpha\sum\limits_{i=1}^{n}\left(  c(\bar{x}_{\sigma(i)},\bar{y}%
_{i})+\varepsilon\right) \\
&  =\alpha\sum_{i=1}^{n}\left(  c(\bar{x}_{i},\bar{y}_{i})-\sum\limits_{i=1}%
^{n}c(\bar{x}_{\sigma(i)},\bar{y}_{i})-2n\varepsilon\right)  >0.
\end{align*}
Since this contradicts the optimality of $\pi,$ we conclude that
$\operatorname*{supp}(\pi)$ is $c$-cyclically monotone.
\end{proof}

\begin{corollary}
\label{cor2}Under the assumption of Theorem $\ref{thm3}$, there is a
$c$-cyclically monotone subset of $X\times Y$ containing the supports of all
supermodular optimal transport plans.
\end{corollary}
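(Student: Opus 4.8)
The plan is to take for the required set the union
\[
\Gamma=\bigcup\bigl\{\operatorname*{supp}(\pi):\pi\text{ is a supermodular optimal transport plan}\bigr\}
\]
and to prove directly that $\Gamma$ is $c$-cyclically monotone (it then contains the support of every supermodular optimal transport plan by construction). Write $m$ for the value of $\operatorname*{Cost}$ common to all such plans; Theorem \ref{thm3} guarantees that at least one exists. The single structural fact that makes the argument run is that, although the Choquet integral fails to be linear in its integrand, it is affine in the underlying set function: for $c\geq 0$ one has $(C)\int_{X\times Y}c\,d\pi=\int_{0}^{\infty}\pi(\{c\geq t\})\,dt$, so a convex combination of capacities produces the corresponding convex combination of costs. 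Likewise the marginal conditions $\pi(A\times Y)=\mu(A)$, $\pi(X\times B)=\nu(B)$, together with monotonicity, supermodularity and upper continuity, are all preserved under finite convex combinations of capacities.

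First I would fix finitely many points $(x_{1},y_{1}),\dots,(x_{n},y_{n})\in\Gamma$ and an arbitrary permutation $\sigma$ of $\{1,\dots,n\}$. By definition of $\Gamma$, each $(x_{i},y_{i})$ lies in $\operatorname*{supp}(\pi^{(i)})$ for some supermodular optimal transport plan $\pi^{(i)}$. I would then form the capacity $\gamma=\frac{1}{n}\sum_{i=1}^{n}\pi^{(i)}$, that is, $\gamma(\mathcal{W})=\frac{1}{n}\sum_{i=1}^{n}\pi^{(i)}(\mathcal{W})$, and verify that it is again a supermodular optimal transport plan: by the remarks above it is an upper continuous supermodular capacity with marginals $\mu$ and $\nu$, and $\operatorname*{Cost}(\gamma)=\frac{1}{n}\sum_{i=1}^{n}\operatorname*{Cost}(\pi^{(i)})=m$.

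Next I would note that $(x_{j},y_{j})\in\operatorname*{supp}(\gamma)$ for every $j$, since any open neighborhood $\mathcal{W}$ of $(x_{j},y_{j})$ satisfies $\gamma(\mathcal{W})\geq\frac{1}{n}\pi^{(j)}(\mathcal{W})>0$. Applying Theorem \ref{thm4} to the supermodular optimal plan $\gamma$ then tells us that $\operatorname*{supp}(\gamma)$ is $c$-cyclically monotone, so $\sum_{i=1}^{n}c(x_{i},y_{i})\leq\sum_{i=1}^{n}c(x_{\sigma(i)},y_{i})$. Since the points and the permutation were arbitrary, $\Gamma$ is $c$-cyclically monotone, which finishes the proof.

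I do not expect a real obstacle here; this is the familiar device of gluing finitely many optimal plans into a single one, now available thanks to Theorem \ref{thm4}. The only step warranting a moment's attention is the check that the cost functional and the defining constraints are genuinely affine in $\pi$ — especially that upper continuity is not lost under averaging — but this is immediate from the layer-cake representation of the Choquet integral and from the fact that a finite sum of descending limits equals the descending limit of the sum.
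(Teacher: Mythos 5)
Your proposal is correct and is essentially the paper's own argument: the paper simply remarks that $\Pi_{\operatorname*{Ch}^{\ast}}(\mu,\nu)$ is convex and that this "easily implies" the claim, and your averaging of the plans $\pi^{(i)}$, the check that the Choquet cost and the defining constraints are affine in the capacity, and the appeal to Theorem \ref{thm4} for the averaged plan are exactly the details behind that remark. Nothing further is needed.
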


\begin{proof}
Indeed, $\Pi_{\operatorname*{Ch}^{\ast}}(\mu,\nu)$ is a convex set, a fact
which easily implies that the union of all supports $\operatorname*{supp}%
(\pi)$ with $\pi\in\Pi_{\operatorname*{Ch}}^{\ast}(\mu,\nu)$ is a
$c$-cyclically monotone set.
\end{proof}

As was noticed by R\"{u}schendorf \cite{Rusch1991c} and Smith and Knott
\cite{SK}, the notion of $c$-cyclically monotone set is intimately related to
the theory of $c$-concave functions (concavity relative to a cost function).
One important fact in this connection is the existence for each $c$-cyclically
monotone subset $S$ of $X\times Y$ of a pair of continuous functions
$\varphi:X\rightarrow\mathbb{R}$ and $\psi:Y\rightarrow\mathbb{R}$ such that:

\smallskip

$(CM1)$ $\varphi(x)=\inf\left\{  c(x,y)-\psi(y):y\in Y\right\}  $ for all $x;$

$(CM2)$ $\psi(y)=\inf\left\{  c(x,y)-\varphi(x):x\in X\right\}  $ for all $y;$

$(CM3)$ $S\subset\left\{  \left(  x,y\right)  \in X\times Y:\varphi
(x)+\psi(y)=c(x,y)\right\}  .$

\smallskip

Combining this remark with Theorem \ref{thm3} one can easily deduce the
following extension of Kantorovich duality to the framework of supermodular
upper continuous capacities:%
\begin{multline*}
\min_{\gamma\in\Pi_{\operatorname*{Ch}^{\ast}}(\mu,\nu)}\operatorname*{Cost}%
(\gamma)=\\
\sup\left\{  (C)\int_{X}\varphi d\mu+(C)\int_{Y}\psi d\nu:(\varphi,\psi)\in
C(X)\times C(Y),\text{ }\varphi(x)+\psi(y)\leq c(x,y)\right\}  .
\end{multline*}

\begin{remark}
Theorem \emph{3} and Theorem \emph{4} also work in the context of lower
continuous supermodular capacities.
\end{remark}


\begin{thebibliography}{99}                                                                                               %
\bibitem {Adams}D. R. Adams, \textit{Choquet integrals in potential theory},
Publicacions Matematiques, \textbf{42,} pp. 3-66, 1998.

\bibitem {Amb}L. Ambrosio, \emph{Lecture notes on optimal transport problems}.
In vol. \emph{Mathematical Aspects of Evolving Interfaces}, Lecture Notes in
Mathematics, \textbf{1812}, pp. 1--52. Springer-Verlag, Berlin, 2003.

\bibitem {CMMM}S. Cerreira-Vioglio, F. Maccheroni, M. Marinacci, M.
Montrucchio, \emph{Signed integral representations of comonotonic additive
functionals,} J. Math. Anal. Appl. \textbf{385}, pp. 895--912, 2012.

\bibitem {Ch1954}G. Choquet, \emph{Theory of capacities}, Annales de l'
Institut Fourier, \textbf{5}, pp. 131-295, 1954.

\bibitem {Ch1986}G. Choquet, \emph{La naissance de la th\'{e}orie des
capacit\'{e}s: r\'{e}flexion sur une exp\'{e}rience personnelle}, Comptes
rendus de l'Acad\'{e}mie des sciences, S\'{e}rie g\'{e}n\'{e}rale, La Vie des
sciences, \textbf{3}, pp. 385--397, 1986.

\bibitem {Denn}D. Denneberg, \textit{Non-Additive Measure and Integral},
Kluwer Academic Publisher, Dordrecht, 1994.

\bibitem {Din}N. Dinculeanu, \emph{Vector Measures}, Pergamon Press, New York, 1967.

\bibitem {EW1996}L. G. Epstein and T. Wang, \emph{"Beliefs about Beliefs"
without Probabilities}, Econometrica, \textbf{64}, No. 6 , pp. 1343-1373, 1996.

\bibitem {Evans}L. C. Evans, \emph{Partial differential equations and
Monge--Kantorovich mass transfer}. In vol. \emph{Current Developments in
Mathematics, 1997} (S. T. Yau ed.), International Press, 1999.

\bibitem {FS}H. F\"{o}llmer, A. Schied, \emph{Stochastic Finance}, Fourth
revised and extended edition, De Gruyter, 2016.

\bibitem {Galichon}A. Galichon, \emph{Optimal transport methods in economics},
Princeton University Press, 2016.

\bibitem {GM}W. Gangbo, R. J. McCann, \emph{The geometry of optimal
transportation}, Acta Math., \textbf{177}, pp. 113-161, 1996.

\bibitem {Ghi}P. Ghirardato, \emph{On independence for non-additive measures,
with a Fubini theorem}, Journal of economic theory \textbf{73}, pp. 261-291, 1997.

\bibitem {K1942}L. V. Kantorovich, \emph{On the translocation of masses},
Journal of Mathematical Sciences, \textbf{133}, pp. 1381-1382, 2006.
Translation after Russian original, Dokl. Akad. Nauk SSSR, \textbf{37}, pp.
199--201, 1942.

\bibitem {K1948}L. V. Kantorovich, \emph{On a problem of Monge}, Journal of
Mathematical Sciences, \textbf{133}, p. 1383, 2006. Translation after Russian
original, Uspekhi Mat. Nauk. \textbf{3}, pp. 225--226, 1948.

\bibitem {M}G. Monge, \emph{M\'{e}moire sur la th\'{e}orie des d\'{e}blais et
des remblais}, Histoire de l'Acad\'{e}mie Royale des Sciences de Paris, avec
les M\'{e}moires de Math\'{e}matique et de Physique pour la m\^{e}me
ann\'{e}e, pages 666--704, 1781.

\bibitem {OBrien}G. L. O'Brien, W. Vervaat, \emph{Capacities, large deviations
and loglog laws}. In vol. \emph{Stable Processes} \emph{and Related Topics}
(S. Cambanis, G. Samorodnitsky, M. S. Taquu eds.), pp. 43-83, Birkh\"{a}user,
Boston, 1991.

\bibitem {Ra}S. T. Rachev, \emph{The Monge--Kantorovich mass transference
problem and its stochastic applications}, Theory of Probability and Its
Applications, \textbf{19}, pp. 647-676, 1985.

\bibitem {RR}S. T. Rachev, L. R\"{u}schendorf, \emph{Mass Transportation
Problems}. Vol. I: \emph{Theory}; Vol. 2: \emph{Applications}.
Springer-Verlag, Berlin, 1998.

\bibitem {Rusch1991c}L. R\"{u}schendorf, \emph{On c-optimal random variables},
Statistics and Probability Letters, \textbf{27}, pp. 267-270, 1996.

\bibitem {San}F. Santambrogio, \emph{Optimal transport for applied
mathematicians}, Birkh\"{a}user, New York, 2015.

\bibitem {SK}C. Smith, M. Knott, \emph{On Hoeffding-Fr\'{e}chet bounds and
cyclic monotone relations}, Journal of Multivariate Analysis \textbf{40}, pp.
328--334, 1992.

\bibitem {Vil2003}C. Villani, \emph{Topics in Optimal Transportation},
Graduate Studies in mathematics, Volume \textbf{58}, American Mathematical
Society, 2003.

\bibitem {Vil2009}C. Villani, \emph{Optimal Transport: Old and New},
Grundlehren der mathematischen Wissenschaften, \textbf{338}, Springer-Verlag,
Berlin, 2009.

\bibitem {WK}Z. Wang, G. J. Klir, \textit{Generalized Measure Theory,
}Springer-Verlag, New York, 2009.

\bibitem {WY}Z. Wang, J.-A. Yan, \emph{A selective overview of applications of
Choquet integrals}, Advanced Lectures in Mathematics, pp. 484--515, Higher
Educational Press and International Press, 2007.

\bibitem {Zhou}L. Zhou, \emph{Integral representation of continuous
comonotonically additive functionals}, Trans. Amer. Math. Soc., \textbf{350},
pp. 1811-1822, 1998.
\end{thebibliography}
\end{document}